\newtheorem{theorem}{Theorem}
\def\vec#1{\ensuremath{\mathchoice
                     {\mbox{\boldmath$\displaystyle\mathbf{#1}$}}
                     {\mbox{\boldmath$\textstyle\mathbf{#1}$}}
                     {\mbox{\boldmath$\scriptstyle\mathbf{#1}$}}
                     {\mbox{\boldmath$\scriptscriptstyle\mathbf{#1}$}}}}%
\setlist[enumerate]{topsep=1ex,itemsep=-0.5ex,leftmargin=*}
\setlist[enumerate,2]{topsep=-0.5ex,itemsep=-0.5ex,leftmargin=*}
\setlist[itemize]{topsep=1ex,itemsep=-0.5ex,leftmargin=*}
\setlist[itemize,2]{topsep=-0.5ex,itemsep=-0.5ex}
\title{Self-foldability of monohedral quadrilateral origami tessellations}
\author{Thomas C. Hull\thanks{Western New England University, {\tt thull@wne.edu}} \and 
Tomohiro Tachi\thanks{University of Tokyo, Japan, {\tt tachi@idea.c.u-tokyo.ac.jp}}}
\date{}
  \let\runtitle\@title
  \let\runauthor\shortauthor
 \gdef\xxxmark{%
   \expandafter\ifx\csname @mpargs\endcsname\relax 
     \expandafter\ifx\csname @captype\endcsname\relax 
       \marginpar{xxx}
     \else
       xxx 
     \fi
   \else
     xxx 
   \fi}
 \gdef\xxx{\@ifnextchar[\xxx@lab\xxx@nolab}
 \long\gdef\xxx@lab[#1]#2{\textbf{[\xxxmark #2 ---{\sc #1}]}}
 \long\gdef\xxx@nolab#1{\textbf{[\xxxmark #1]}}
\def\vec{\mathbf} 
\newcommand{\vecrho}{\bm \rho}
\begin{document}

\maketitle

\begin{abstract}
Using a mathematical model for self-foldability of rigid origami, we determine which monohedral quadrilateral tilings of the plane are uniquely self-foldable.  In particular, the Miura-ori and Chicken Wire patterns are not self-foldable under our definition, but such tilings that are rotationally-symmetric about the midpoints of the tile are uniquely self-foldable.

\end{abstract}

\section{Introduction}
\label{sec:introduction}
In many applications of origami in physics and engineering one wants a sheet of material to fold on its own into some shape, say with some actuators giving a rotational moment of force at the creases.  Such a process is called {\em self-folding}, and the challenge of programming the actuator forces to guarantee that the crease pattern folds into the desired shape is an area of growing research interest \linebreak \cite{Chen2018,Stern2018}.  In 2016 the authors proposed a mathematical definition for self-folding rigid origami along with a model for determining when a set of actuator forces, which we call {\em driving forces}, will be guaranteed to self-fold a given rigidly-foldable crease pattern in a predictable way \cite{Tachi2016}.  However, this model has not yet been tested on very many crease patterns.

In this paper we use the mathematical model for self-foldability to show that two well-known crease patterns, the Miura-ori and the Chicken Wire pattern (see Figure~\ref{TT-fig0}), are not, in fact, uniquely self-foldable.  That is, no set of driving forces will by themselves be guaranteed to fold these patterns in a completely predictable way; rather, there will always be multiple folded states into which a given set of driving forces could fold.  We also argue that it is the high degree of symmetry in the Miura and Chicken Wire patterns that lead to this behavior.  In fact, we prove that crease patterns that form a monohedral tiling from a generic quadrilateral tile, where the tiling is rotationally-symmetric about the midpoint of each side of the quadrilaterals, have a driving force that uniquely self-folds the crease pattern into a desired target shape.

\begin{figure}
	\centering
	\includegraphics[width=\linewidth]{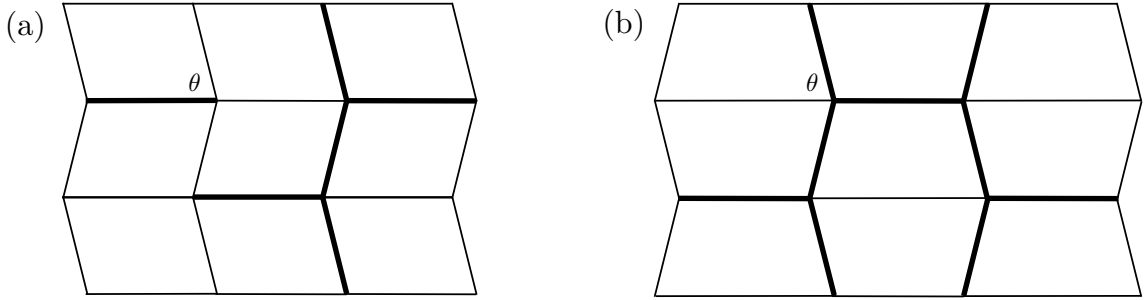}
	\caption{$3\times 3$ sections of the (a) Miura-ori and (b) Chicken Wire crease patterns, generated by a parallelogram or isosceles trapezoid tile with acute angle $\theta$.}
	\label{TT-fig0}
\end{figure}

\section{Self-folding}
\label{sec:section1}
We briefly describe the self-folding mathematical model  from \cite{Tachi2016}.
Given a rigidly-foldable crease pattern with $n$ creases, we define its {\em configuration space} $S\subset\mathbb{R}^n$ to be the set of points $x\in\mathbb{R}^n$ where the $i$th coordinate of $x$ is the folding angle of the $i$th crease in a rigidly-folded state of the crease pattern.  A {\em rigid folding} is defined to be an arc length-parameterized piecewise $C^1$ curve $\vecrho(t)$ in $S$ for $t\in[0,s]$ where $\vecrho(0)$ is the initial state of the folded object and $\vecrho(s)$ is the target state.  For self-folding, we want to find rotational forces to place on the creases that will force the crease pattern to fold from the initial state to the target state along the curve $\vecrho(t)$, and to do that we consider vector fields on our configuration space $S$.  
A vector field $\vec f$ on $S$  is called a {\em driving force} for the rigid folding $\vecrho(t)$ if $\vecrho'(t)\cdot {\vec f}(\vecrho(t))>0$ for all $t\in[0,s]$.  Depending on the specifics of the crease pattern and the chosen folding path $\vecrho(t)$, there may be many different vector fields $\vec f$ that act as driving forces for $\vecrho(t)$.
We also constrain the driving force to be additively separable function, i.e., $ \vec f(\vecrho) = f_1\vec e_1(\rho_1) +\cdots+f_n\vec e_n (\rho_n)$, where $\vec e_i$ represents the unit vector along the $i$th axis of the configuration space\footnote{
The physical interpretation of an additively separable driving force is that each actuator at the crease knows its own folding angle, but not  the others, i.e., actuators are not communicating each other.}.

Define $d(t)=\vecrho'(t)\cdot {\vec f}(\vecrho(t))$, which is called the {\em forward force} of $\vec f$ along $\vecrho(t)$. This quantity measures the cosine of angle between ${\vec f}(\vecrho(t))$ and $\vecrho'(t)$ and thus represents the amount of force ${\vec f}(\vecrho(t))$ contributes to pushing in the direction of $\vecrho'(t)$ along the rigid folding curve $\vecrho(t)$\footnote{This amount corresponds to the power in a physical sense.}.

Note that given a point $x\in S$, there could be many rigid foldings $\vecrho(t)$ in the configuration space $S$ passing through $x$.
We consider the set of tangents of such valid folding passes projected on to an $n$-dimensional unit sphere called {\em valid tangents} $V_x$ .
$V_x$ could be made of disconnected components, and these can be useful in self-foldability analysis.
We say that a rigid folding $\vecrho(t)$ is {\em self-foldable} by a driving force $\vec f$ if the forward force $d(t)$ is a local maximum on $V_{x(t)}$ for $t\in[0,s]$. 
Maximizing $d(t)$ among the tangent vectors insures that $\vec f$ will push the folding in the direction of the tangent $\vecrho'(t)$ and thus along the curve $\vecrho(t)$. 
 If $\vecrho(t)$ is the only rigid folding that is self-foldable by $\vec f$, then we say $\vecrho(t)$ is {\em uniquely self-foldable by $\vec f$}.
When considering the unique self-folding along a folding motion $\vecrho(t)$, we are interested in the ``wrong mode(s)'', which corresponds to the components of $V_x$ that do not contain the projection of $\pm\vecrho'(t)$, which we call the valid tangents \emph{surrounding} $\pm\vecrho'(t)$ and denote it by $\overline{V_x}$.

%

The following theorems, proved in \cite{Tachi2016}, can helpful to determine whether or not a given rigidly-foldable crease pattern is uniquely self-foldable.

\begin{theorem}\label{TT-thm1}
A rigid folding $\vecrho(t)$ from the unfolded state $\vecrho(0)={\vec 0}$ to a target state is uniquely self-foldable by a driving force $\vec f$  only if at the unfolded state $t=0$ we have $\vec f$  is perpendicular to every tangent vector in the valid tangents surrounding $\pm\vecrho'(0)$.
\end{theorem}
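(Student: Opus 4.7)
The plan is to prove the contrapositive: if $\vec f(\vec 0)$ fails to be perpendicular to some tangent vector in $\overline{V_0}$, then $\vecrho(t)$ is not uniquely self-foldable. The key ingredient is an antipodal symmetry of the valid tangents at the flat state, which lets us convert a nontrivial forward force on a surrounding component into a competing self-foldable direction.

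First I would show that $V_0$ is invariant under $v\mapsto -v$. The reason is that the configuration space $S$ is antipodally symmetric near $\vec 0$: reflecting any rigidly folded configuration through the plane of the unfolded paper is an isometry that sends every folding angle $\rho_i$ to $-\rho_i$, so the reflected configuration is again a rigidly folded state. Hence if $\vecrho(t)$ is a rigid folding with $\vecrho(0)=\vec 0$, then $-\vecrho(t)$ is also a rigid folding starting at $\vec 0$, and its initial tangent is $-\vecrho'(0)$. Thus $v\in V_0 \Leftrightarrow -v\in V_0$; because the antipodal map interchanges the components containing $\vecrho'(0)$ and $-\vecrho'(0)$ and permutes the remaining components, it also preserves $\overline{V_0}$.

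Now suppose for contradiction that some $u\in\overline{V_0}$ has $u\cdot\vec f(\vec 0)\ne 0$; after replacing $u$ by $-u$ using the symmetry above, assume $u\cdot\vec f(\vec 0) > 0$. Let $C$ be the connected component of $V_0$ containing $u$, so $C\subset\overline{V_0}$. The continuous function $v\mapsto v\cdot\vec f(\vec 0)$ attains its maximum on the compact set $C$ at some $v^*$, with $v^*\cdot\vec f(\vec 0)\ge u\cdot\vec f(\vec 0)>0$. Pick a rigid folding $\bm\sigma(t)$ starting at $\vec 0$ whose initial unit tangent equals $v^*$. At $t=0$, the forward force along $\bm\sigma$ is positive and $v^*$ is a local maximum of $v\mapsto v\cdot\vec f(\vec 0)$ on $V_0$ (being the maximum on its component). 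So $\bm\sigma$ satisfies the self-foldability condition at $t=0$; by continuity of the valid-tangent structure as $x$ varies, the local-maximum property persists on a short initial interval, producing a self-foldable rigid folding distinct from $\vecrho$ and contradicting unique self-foldability.

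The step I expect to be the main obstacle is the last one: promoting the local-maximum property of the forward force along $\bm\sigma$ from $t=0$ to a genuine competing self-foldable rigid folding requires that the component structure of $V_{x(t)}$ vary continuously with $t$ near $\vec 0$. I expect this to follow from the generic smoothness of $S$ away from higher-order degeneracies as formalized in \cite{Tachi2016}; absent a clean such regularity, one would instead interpret unique self-foldability in a sufficiently local sense so that the existence of a second tangent direction at $t=0$ already contradicts uniqueness.
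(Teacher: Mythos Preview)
The paper does not prove this theorem; it is listed among the results ``proved in \cite{Tachi2016}'' and is simply quoted. So there is no in-paper argument to compare against, and your proposal has to be judged on its own.

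Your skeleton is sound: the antipodal symmetry of $V_{\vec 0}$ at the flat state is exactly the right mechanism, and maximizing $v\mapsto v\cdot\vec f(\vec 0)$ over the surrounding component $C$ containing $u$ does produce a direction $v^*$ at which the forward force is positive and locally maximal on $V_{\vec 0}$. Two issues remain. The minor one: you invoke compactness of $C$ to extract $v^*$, which requires $V_{\vec 0}$ to be closed in the unit sphere; this holds because the first-order rigidity constraints are algebraic, but it should be stated rather than slipped in. The substantive one is the gap you already flag: to contradict \emph{unique} self-foldability you must exhibit a second rigid folding ${\bm\sigma}$ that is self-foldable on an entire interval $[0,s']$, not merely at $t=0$, and nothing in the definitions recorded here guarantees that the local-maximum property at $v^*$ persists along ${\bm\sigma}$ for $t>0$. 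Your proposed escape---reading unique self-foldability infinitesimally, so that a second locally-maximal tangent at $t=0$ already violates it---is consistent with how the theorem is actually \emph{used} in this paper (the proof of Theorem~\ref{TT-cor2} works entirely at $t=0$), and is presumably what \cite{Tachi2016} makes precise; but under the definitions as written here, that step still needs either an explicit regularity lemma for the family $V_x$ or a sharpened definition before the argument closes.
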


\begin{theorem}\label{TT-thm2}
Consider an origami crease pattern at the flat, unfolded state and the tangent  $\vecrho'(0)$ of a rigid folding $\vecrho(t)$ at that state.  Then a driving force $\vec f$ exists to make the rigid folding uniquely self-foldable from the flat state only if the dimension $m$ of the tangent space $T_{\vecrho(0)}S$ (which is the solution space of first-order (infinitesimal) constraints of the rigid origami mechanism) is strictly greater than the dimension $n$ of the linear space spanned by the vectors in the valid tangents surrounding $\pm\vecrho'(0)$.  (I.e., we need $m>n$.)
\end{theorem}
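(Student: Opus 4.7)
The plan is to combine Theorem~\ref{TT-thm1} with the defining positivity of a driving force at $t=0$, and to argue that $\vecrho'(0)$ must contribute an additional direction in $T_{\vecrho(0)}S$ beyond the span of the surrounding valid tangents.

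First I would invoke Theorem~\ref{TT-thm1}: if $\vecrho(t)$ is uniquely self-foldable by $\vec f$, then $\vec f(\vec 0)$ is perpendicular to every vector in $\overline{V_{\vec 0}}$, and hence to its entire $n$-dimensional linear span $W$. Because every vector in $\overline{V_{\vec 0}}$ is the initial tangent of a rigid folding curve lying in $S$, the subspace $W$ sits inside $T_{\vecrho(0)}S$. Next I would apply the defining condition that $\vec f$ is a driving force for $\vecrho(t)$ at $t=0$, namely $\vec f(\vec 0)\cdot\vecrho'(0)>0$. Since $\vec f(\vec 0)$ vanishes on all of $W$, this strict inequality forces $\vecrho'(0)\notin W$. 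Combined with $\vecrho'(0)\in T_{\vecrho(0)}S$, the sum $W+\mathrm{span}\{\vecrho'(0)\}$ is a subspace of $T_{\vecrho(0)}S$ of dimension at least $n+1$, so $m\ge n+1$ and hence $m>n$.

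The argument is essentially a two-line consequence of Theorem~\ref{TT-thm1} plus the positivity requirement on a driving force, so the principal obstacle is merely bookkeeping: verifying that $W$ and $\vecrho'(0)$ both lie in $T_{\vecrho(0)}S$, and that perpendicularity to $\overline{V_{\vec 0}}$ extends by linearity to its entire span. Both points follow immediately from the definitions of rigid folding and of the configuration space, so no serious technical difficulty arises. The contrapositive formulation (if $m=n$ then every $\vec f$ satisfying the conclusion of Theorem~\ref{TT-thm1} already annihilates $\vecrho'(0)$, contradicting the driving-force inequality) gives an equivalent, perhaps more intuitive, way to present the same argument.
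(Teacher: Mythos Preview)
The paper does not actually supply its own proof of Theorem~\ref{TT-thm2}; it is one of the three results quoted from \cite{Tachi2016} without argument, so there is no in-paper proof to compare against directly. Your argument is correct and is precisely the expected one: invoke Theorem~\ref{TT-thm1} to get $\vec f(\vec 0)\perp W$, use the driving-force inequality $\vec f(\vec 0)\cdot\vecrho'(0)>0$ to force $\vecrho'(0)\notin W$, and then count dimensions inside $T_{\vecrho(0)}S$.

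It is worth noting that the paper does carry out essentially this same reasoning, just not under the heading of Theorem~\ref{TT-thm2}. The necessity direction of the proof of Theorem~\ref{TT-cor2} argues exactly that $\vec f(0)$ must be orthogonal to the span of $\overline{V_{\vecrho(0)}}$ and that $\vecrho'(0)$ therefore cannot lie in that span (else the forward force would vanish). Your proof is simply this observation together with the trivial dimension count $\dim(W+\mathrm{span}\{\vecrho'(0)\})=n+1\le m$. So while you cannot be compared to a proof the paper omits, your approach matches the paper's own style of argument as exhibited in Theorem~\ref{TT-cor2}.
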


\begin{theorem}\label{TT-thm3}
Given an origami crease pattern made of quadrilateral faces whose vertices form an $a\times b$ grid, the dimension of the tangent space $T_{\vec 0}S$ is $a+b$ (where $\vec 0$ represents the flat, unfolded state).
\end{theorem}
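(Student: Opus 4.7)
The plan is to construct an explicit basis of $T_{\vec{0}}S$ by processing the quadrilateral grid vertex by vertex. Label the horizontal creases as $h_{i,j}$ (between vertices $(i,j)$ and $(i+1,j)$) and the vertical creases as $v_{i,j}$ (between $(i,j)$ and $(i,j+1)$), with $(i,j)$ ranging over the natural index sets of the $a\times b$ grid. At every interior vertex of the flat crease pattern, the first-order rigidity constraint of the rigid origami mechanism reduces to the planar vector equation
\[
\sum_{k=1}^{4}\rho_{k}\vec{e}_{k}\;=\;\vec{0},
\]
where $\vec{e}_{k}\in\mathbb{R}^{2}$ are the four unit tangent vectors along the incident creases. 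This is two scalar equations relating the four incident folding angles, so at each interior vertex two of the four incident $\rho$'s are forced by the other two.

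The key step is a sweep-and-solve argument. I would take as free parameters the folding angles along the topmost row of vertical creases (one per column, giving $a$ parameters) together with those along the leftmost column of horizontal creases (one per row, giving $b$ parameters), for a total of $a+b$ scalar parameters. Processing the interior vertices $(i,j)$ in lexicographic order, at each vertex the two ``incoming'' angles $\rho_{h_{i-1,j}}$ and $\rho_{v_{i,j-1}}$ are already fixed---either as chosen free parameters or from an earlier step of the sweep---and the two scalar equations uniquely determine the ``outgoing'' pair $(\rho_{h_{i,j}},\rho_{v_{i,j}})$, provided the $2\times 2$ matrix formed by the outgoing crease directions is non-singular, which is a generic condition on the quadrilateral tile.

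The main obstacle I expect is a careful bookkeeping argument to show that the sweep actually exhausts the degrees of freedom---that the $a+b$ chosen parameters together with the propagated values account for the complete null space of the constraint system, and that no boundary creases opposite the sweep contribute extra independent dimensions. I plan to handle this by a two-variable induction on the grid indices $(i,j)$, verifying for each crease not directly visited by the sweep that its folding angle is forced by a closing consistency condition coming from the opposite boundary, exploiting the Cartesian-product structure of the grid and the fact that each interior vertex contributes exactly two new unknowns balanced by two new equations. Once that is established, the $a+b$ parameter variations give a linearly independent spanning set for $T_{\vec{0}}S$, so $\dim T_{\vec{0}}S=a+b$.
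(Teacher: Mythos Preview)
The paper does not actually prove this theorem; it is quoted as one of several results ``proved in \cite{Tachi2016}'', so there is no in-paper argument to compare your proposal against.  Your sweep-and-solve plan is essentially the standard argument and is correct.

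Two of the obstacles you anticipate are not obstacles at all.  First, the $2\times 2$ matrix of outgoing crease directions is \emph{always} non-singular, not merely generically: the two outgoing creases are adjacent at the vertex, and in a crease pattern whose faces are (non-degenerate) quadrilaterals every sector angle lies strictly between $0$ and $\pi$, so adjacent crease directions are never parallel.  You can drop the genericity caveat.  Second, there is no separate ``closing consistency condition'' at the far boundary.  With your indexing, every crease that is not one of the $a+b$ chosen parameters is the outgoing crease of \emph{exactly one} interior vertex in the sweep: $h_{i,j}$ with $i\ge 1$ is produced at vertex $(i,j)$, and $v_{i,j}$ with $j\ge 1$ is produced at vertex $(i,j)$.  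So the sweep visits each of the $2ab$ remaining creases exactly once and leaves nothing to reconcile.  The dimension count is then just
\[
\dim T_{\vec 0}S \;=\; \underbrace{(2ab+a+b)}_{\text{creases}}\;-\;\underbrace{2ab}_{\text{indep.\ constraints}}\;=\;a+b,
\]
with your $a+b$ boundary parameters furnishing an explicit basis (linear independence is immediate since the parameter creases themselves carry the prescribed values).  The two-variable induction you propose is more machinery than the problem needs.
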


For the purposes of this paper, we introduce an addition to these theorems not previously published.

%
%

\begin{theorem}\label{TT-cor2}
Given a rigidly-foldable origami crease pattern that has a one degree of freedom (1-DOF) rigid folding path $\vecrho(t)$, then a driving force $\vec f$ exists to make $\vecrho(t)$ uniquely self-foldable from the flat state if and only if $\vecrho'(0)$ is not contained in the linear space spanned by the tangent vectors in the valid tangents $\overline{V_{\vecrho(0)}}$ surrounding $\pm\vecrho'(0)$.
\end{theorem}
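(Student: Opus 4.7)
The statement is a biconditional, so I plan to prove the two directions separately. The ``only if'' direction is essentially immediate from Theorem~\ref{TT-thm1}; the ``if'' direction requires an explicit construction of a driving force.

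\emph{Necessity.} I would argue by contrapositive. Suppose $\vecrho'(0)$ lies in $W := \mathrm{span}(\overline{V_{\vecrho(0)}})$ and let $\vec f$ be any driving force making $\vecrho(t)$ uniquely self-foldable. By Theorem~\ref{TT-thm1}, $\vec f(\vec 0)$ is perpendicular to every tangent vector in $\overline{V_{\vec 0}}$, and hence to all of $W$. Since $\vecrho'(0)\in W$, this yields $d(0)=\vec f(\vec 0)\cdot\vecrho'(0)=0$, contradicting the driving-force condition $d(0)>0$. Therefore $\vecrho'(0)\notin W$.

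\emph{Sufficiency.} Assume $\vecrho'(0)\notin W$. I would define $\vec v$ to be the orthogonal projection of $\vecrho'(0)$ onto $W^\perp$; by hypothesis $\vec v\neq\vec 0$ and $\vec v\cdot\vecrho'(0)=\|\vec v\|^2>0$. Take $\vec f$ to be the constant, additively-separable vector field $\vec f(\vecrho)\equiv\vec v$, so that each coordinate function $f_i(\rho_i)=v_i$. At $t=0$, the perpendicularity $\vec v\perp W$ annihilates the forward force along every wrong-mode tangent in $\overline{V_{\vec 0}}$, while $d(0)=\|\vec v\|^2>0$; this isolates $\vecrho'(0)$ as a strict local maximum of the forward force on $V_{\vec 0}$ and excludes every alternative rigid folding emanating from $\vec 0$. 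For $t\in(0,s]$, the 1-DOF assumption collapses the valid-tangent set along $\vecrho$ locally to $\{\pm\vecrho'(t)\}$, so self-foldability reduces to the single inequality $d(t)=\vec v\cdot\vecrho'(t)>0$. Continuity gives this near $t=0$; if it ever fails later, I would exploit the single-parameter nature of the 1-DOF motion and the freedom in the additively-separable form to tune the scalar functions $f_i(\rho_i)$ along the path so that $d(t)>0$ on all of $[0,s]$ while keeping $\vec f(\vec 0)=\vec v$ unchanged.

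\emph{Main obstacle.} The conceptually hardest piece is the global verification along the interior of the path rather than at $t=0$ alone. The 1-DOF hypothesis carries the load here: it rules out wrong-mode branches at interior points and reduces the interior self-foldability condition to a single positivity that the additively-separable form is flexible enough to maintain. The flat-state step is then a short linear-algebra computation paired with Theorem~\ref{TT-thm1}, so the heart of the argument is identifying the right $\vec v\in W^\perp$ with $\vec v\cdot\vecrho'(0)>0$.
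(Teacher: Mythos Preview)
Your proposal is correct and follows essentially the same route as the paper: necessity via Theorem~\ref{TT-thm1} (perpendicularity of $\vec f(\vec 0)$ to $W$ forces $d(0)=0$ if $\vecrho'(0)\in W$), and sufficiency by taking $\vec f(\vec 0)$ to be the orthogonal projection of $\vecrho'(0)$ onto $W^\perp$. Your treatment of the interior of the path (extending the constant force and, if needed, tuning the separable components while preserving $\vec f(\vec 0)$) is actually more careful than the paper's own proof, which constructs only $\vec f(0)$ and leaves the extension along $[0,s]$ implicit.
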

\begin{proof}
Sufficiency follows from Theorem~\ref{TT-thm1}. 
In order for the driving force $\vec f(t)$ to make $\vecrho(t)$ uniquely self-foldable, $\vec f(0)$ needs to be perpendicular to every tangent vector in $\overline{V_{\vecrho(0)}}$ and thus perpendicular to the linear space containing $\overline{V_{\vecrho(0)}}$.
$\vecrho'(t)$ cannot be contained in the linear space containing $\overline{V_{\vecrho(0)}}$ because otherwise the forward force will be $0$.
Now we consider the orthogonal projection $\vecrho'(0)^\|$ of $\vecrho'(0)$ onto the linear space spanned by $\overline{V_{\vecrho(0)}}$.
Then $\vec f(0)=\vecrho'(0)-\vecrho'(0)^\|$ is perpendicular to the linear space spanned by $\overline{V_{\vecrho(0)}}$, and gives positive dot product with $\vecrho'(0)$, so making it uniquely self-foldable.
\end{proof}

Finally, we include another definition.  Given a rigidly-foldable crease pattern that has 1-DOF as a rigid folding mechanism, its configuration space $S$ will contain curves that pass through the origin (the unfolded state).  These different curves are referred to as the different {\em folding modes} (or just {\em modes}) of the rigid origami.  The modes represent the different ways that the crease pattern can rigidly fold from the flat state.  (By convention, two rigid foldings that have the same folding angles except for switching mountains to valleys and vice-versa are considered part of the same mode.)

\section{Origami tessellations and the main theorem}
\label{sec:section2}

\begin{figure}
	\centering
	\includegraphics[width=\linewidth]{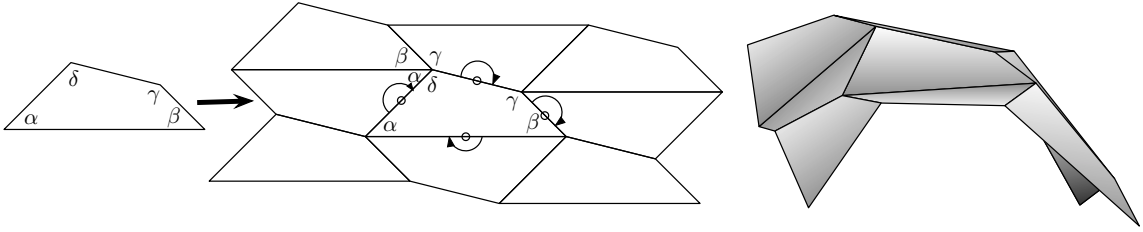}
	\caption{A rotationally-symmetric, non-flat-foldable origami tessellation.}
	\label{TT-fig2}
\end{figure}

Origami tessellations are origami folds whose crease patterns are a tiling (or a subset of a tiling)  of the plane.  We will only be considering origami tessellations that are also rigidly-foldable.  A {\em monohedral} tessellation is one in which all the tiles are the same, so that the tiling is generated by a single tile.  For example, the classic Miura-ori is a rigidly-foldable, monohedral origami tessellation that is generated by a parallelogram.

Another class of such tessellations can be made from any convex quadrilateral by repeatedly rotating about the midpoint of each side of the tile by $180^\circ$.  We will call such a quadrilateral tiling a {\em rotationally-symmetric} tiling; see Figure~\ref{TT-fig2}. Because the interior angles of the tile ($\alpha,\beta,\gamma, \delta$ in Figure~\ref{TT-fig2}) must sum to $360^\circ$, we know that each vertex of such a tiling will be 4-valent.  From this, we know that the tiling will be a crease pattern that is rigidly foldable by some finite amount from the flat state.\footnote{This is because  monohedral quadrilateral tilings fit into the class of {\em Kokotsakis polyhedra}, the rigid flexibility of which has been studied for quite some time \cite{Kokotsakis1933} and recently classified in the quadrilateral case \cite{Izmestiev2017}.} If we also require that the opposite angles of the quadrilateral tile be supplementary, then Kawasaki's Theorem will be satisfied at each vertex of the resulting tessellation, implying that the tessellation will be (locally) flat-foldable; see Figure~\ref{TT-fig1}.

\begin{figure}
	\centering
	\includegraphics[width=\linewidth]{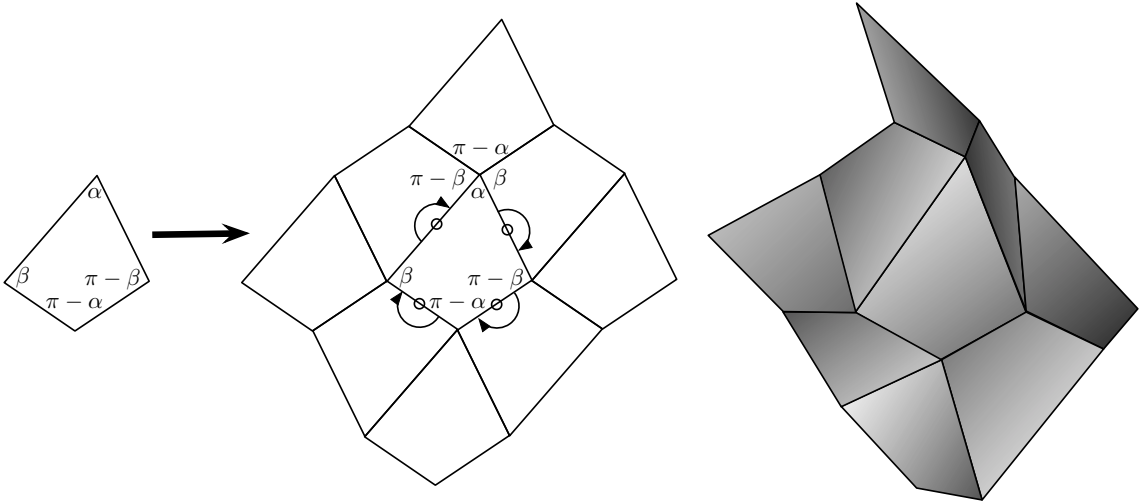}
	\caption{A rotationally-symmetric flat-foldable origami tessellation.}
	\label{TT-fig1}
\end{figure}

The main results of this paper are the following:

\begin{theorem}\label{TT-thm4}
Let $C$ be a monohedral quadrilateral origami tessellation that is rigidly foldable with rigid folding $\vecrho(t)$ for $t\in[0,s]$ for any initial folded state $\vecrho(0)$ and target state $\vecrho(s)$.  Label the interior angles of the quadrilateral tile $\alpha$, $\beta$, $\gamma$, and $\delta$ as in Figure~\ref{TT-fig2}.

(a) If $C$ is the Miura-ori (so the tile is a parallelogram), then $\vecrho(t)$ is not uniquely self-foldable.

(b) If $C$ is rotationally-symmetric and locally flat-foldable with $\alpha=\beta<90^\circ$ (so the tile is an isosceles trapezoid), then $\vecrho(t)$ is not uniquely self-foldable.  (This is the Chicken Wire pattern \cite{Evans:2015}.)

(c) If $C$ is rotationally-symmetric and locally flat-foldable with $\alpha<90^\circ$ and $\alpha\not=\beta$ (as in Figure~\ref{TT-fig1}), then there exists a driving force $\vec f$ that makes $\vecrho(t)$ uniquely self-foldable.  

(d) If $C$ is rotationally-symmetric and not locally flat-foldable with $\alpha<90^\circ$ and $\alpha\not=\beta, \delta$ (as in Figure~\ref{TT-fig2}), then there exists a driving force $\vec f$ that makes $\vecrho(t)$ uniquely self-foldable. 
\end{theorem}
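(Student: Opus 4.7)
The plan is to apply Theorem~\ref{TT-cor2}: a 1-DOF rigid folding $\vecrho(t)$ starting from the flat state is uniquely self-foldable if and only if $\vecrho'(0)$ does not lie in the linear span of the tangents in $\overline{V_{\vec 0}}$, i.e., the tangents at the origin of the other folding modes. So for each of the four cases I would enumerate all rigid folding modes through $\vec 0$, read off their tangent vectors at the unfolded state, and check whether $\vecrho'(0)$ lies in (cases (a) and (b)) or lies outside (cases (c) and (d)) the linear span of the other modes' tangents.

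The first concrete step is to analyze a single rotationally-symmetric degree-4 vertex with angles $\alpha,\beta,\gamma,\delta$. Generically such a vertex is 1-DOF with two folding branches, so a global folding mode of the tessellation corresponds to a consistent choice of branch at every vertex. Because the tile is repeated by rotations about the midpoints of its edges, adjacent vertices are mirror images and the compatibility condition becomes tractable: at first order each crease imposes a linear relation between the folding-angle derivatives at its two endpoint vertices, and these relations propagate across the tessellation from a chosen reference vertex. I would compute these first-order ratios explicitly using the standard spherical-linkage equations at a degree-4 vertex.

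For parts (a) and (b), the extra symmetry of the tile (parallelogram for Miura-ori; isosceles trapezoid with $\alpha=\beta$ for Chicken Wire) admits ``popped'' global modes in which an entire horizontal or vertical strip of vertices is folded in the opposite branch yet still satisfies the compatibility relations on every shared crease. The tangents of these popped-strip modes, together with the main mode, will span a subspace of $T_{\vec 0}S$ (of dimension $a+b$ by Theorem~\ref{TT-thm3}) that already contains $\vecrho'(0)$; I expect to write $\vecrho'(0)$ explicitly as a linear combination of an ``all-rows-popped'' and an ``all-columns-popped'' mode tangent, directly violating the condition of Theorem~\ref{TT-cor2}.

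For parts (c) and (d), the asymmetry of the tile (either $\alpha\ne\beta$ under flat-foldability, or additionally $\alpha\ne\delta$ without flat-foldability) obstructs most of these popped-strip modes: the propagation rule forces an inconsistency on some shared crease unless the entire tessellation folds synchronously. I would enumerate the few surviving modes (essentially the main mode together with its mountain-valley swap, and possibly a finite number of strip variants) and verify, from the explicit first-order ratios, that $\vecrho'(0)$ is linearly independent from their tangents. The distinction between (c) and (d) amounts to whether $\alpha+\gamma=\pi$ holds, which adjusts the precise form of the ratios without changing the independence argument. The main obstacle will be this mode-enumeration step: making sure no exotic global mode has been overlooked, and cleanly identifying which tile symmetries eliminate which modes so that only the synchronized modes survive in (c) and (d).
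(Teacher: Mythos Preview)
Your high-level plan matches the paper's: invoke Theorem~\ref{TT-cor2} after enumerating the rigid-folding modes through $\vec 0$ and checking whether the target tangent lies in the span of the others. For (a) and (b) your ``popped-strip'' picture is essentially what the paper does: it works on a $3\times 3$ patch, writes down four row- and column-type tangent vectors $\vec a_1,\vec a_2,\vec b_1,\vec b_2$ spanning $T_{\vec 0}S$ (whose dimension is $4$ by Theorem~\ref{TT-thm3}), and exhibits the standard Miura tangent explicitly as their linear combination.

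For (c) and (d) there is a genuine methodological difference worth flagging. Your propagation-via-first-order-ratios plan coincides with the paper's \emph{alternate} proof of (c), which uses the flat-foldable folding multipliers $p,q$ and the product-of-ratios-equals-one constraint around each tile to show that only two global modes survive. The paper's \emph{main} proof of (c) and (d), however, avoids any explicit folding-angle computation: it invokes the bird's-foot characterization of rigidly foldable degree-$4$ vertices (Theorem~\ref{TT-thm5}) together with a convex-side/concave-side partition of the four creases at a non-mirror-symmetric vertex, and argues combinatorially that the MV assignment at one vertex forces the assignment at every neighbor, leaving exactly two global modes. The payoff is a uniform argument that handles the flat-foldable and non-flat-foldable cases at once. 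Your ratio approach will go through cleanly for (c), but for (d) the non-flat-foldable folding-angle relations are substantially messier (the paper itself relegates them to references), so the bird's-foot route is likely the cleaner way to close the mode-enumeration step you correctly identify as the main obstacle.
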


What we see here is that monohedral quadrilateral origami tessellations that have too much symmetry, like the Miura-ori and the Chicken Wire patterns, are not uniquely self-foldable by our definition.  But {\em generic} quadrilateral origami tessellations that are rotationally-symmetric will be uniquely self-foldable.  This gives us a large class of tessellation patterns that are self-foldable.

\section{Proof of the main theorem}

\begin{proof}[Proof of (a) and (b)]

The standard way of rigidly folding the Miura-ori is shown in Figure~\ref{TT-fig3}(a).  When such a rigid origami flexes, the folding angles (that is, $\pi$ minus the dihedral angles) of the creases that meet at a vertex have an especially nice relationship for degree-4, flat-foldable vertices that has become a standard tool in origami engineering (see, for example, \cite{Evans:2015}).  Specifically, the folding angles that are opposite of each other at a vertex will be equal in magnitude, and the opposite pair that have the same MV assignment will have a greater folding angle $\rho_A$ than the opposite pair that have different MV assignments, whose folding angle we'll call $\rho_B$.  If we reparameterize with the tangent of half the folding angles, we have the surprising linear relationship $\tan(\frac{\rho_B}{2} )= p \tan(\frac{\rho_A}{2})$, where $0<p<1$ is a constant, called the {\em folding multiplier} (or sometimes the {\em folding speed}), determined by the plane angles around the vertex.  See \cite{Evans:2015,Tachi2016} for more details and proofs of these folding angle facts.  In the case of the Miura-ori and Chicken Wire patterns, the folding multiplier becomes $p=\cos\theta$, where $\theta$ is the acute angle of the vertices.

\begin{figure}
	\centering
	\includegraphics[width=\linewidth]{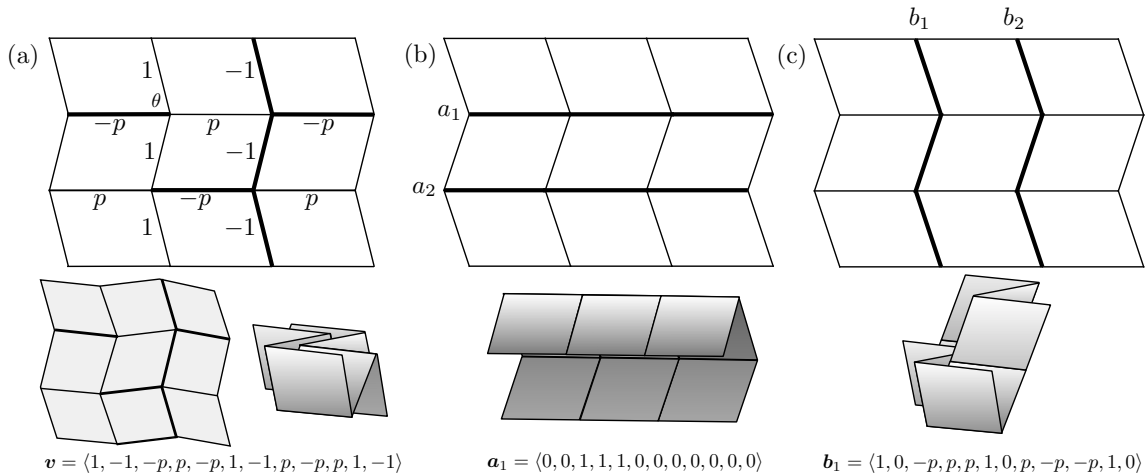}
	\caption{(a) A $3\times 3$ Miura-ori, standard folding, with creases labeled by folding multipliers and written as a vector $\vec v$.  (b) Highlighting the creases $a_1, a_2$.  (c) Highlighting the creases $b_1, b_2$.}
	\label{TT-fig3}
\end{figure}

We consider a $3\times 3$ section of either a Miura-ori (as in Figure~\ref{TT-fig3}(a)) or a Chicken Wire pattern.  To determine its potential for self-foldability, we consider a point $x$ in the configuration space $S$ of the crease pattern and a tangent vector $\vec v$ at $x$ in the direction of a  rigidly folding $\vecrho(t)$ passing through $x$ at $\vecrho(t_0)$ (thus ${\vec v}=\vecrho'(t_0)$).  We may compute the dimension $m$ of the tangent space $T_{\vec v}S$.  By Theorem~\ref{TT-thm3}, we have $m=2+2=4$ for our $3\times 3$ Miura-ori/Chicken Wire tiling.

Since these crease patterns have 12 creases, their configuration space will be subsets of $\mathbb{R}^{12}$.  We coordinatize the folding modes by letting the coordinates of a vector ${\vec v}\in\mathbb{R}^{12}$ be the folding angles of the creases, reading from left-to-right and top-to-bottom, as in Figure~\ref{TT-fig3}.  
We then can describe a basis for $T_{\vec v}S$ at the unfolded state of the Miura-ori:
\begin{align*}
    {\vec a}_1 & = \langle 0,0,1,1,1,0,0,0,0,0,0,0\rangle \\
    {\vec a}_2 & = \langle 0,0,0,0,0,0,0,1,1,1,0,0\rangle \\
    {\vec b}_1 & = \langle 1,0,-p,p,p,1,0,p,-p,-p,1,0\rangle \\
    {\vec b}_2 & = \langle 0,1,-p,-p,p,0,1,p,p,-p,0,1\rangle.
\end{align*}
Note that positive (negative) folding multipliers correspond to valley (mountain) creases.  If we let ${\vec v}$ be the folding mode the unfolded paper to the standard folded state of the Miura-ori (as shown in Figure~\ref{TT-fig3}(a)), then we have that
$${\vec v} = \langle 1, -1, -p, p, -p,1,-1,p,-p,p,1,-1\rangle =  -p{\vec a}_1 + p{\vec a}_2+   {\vec b}_1 - {\vec b}_2.$$
This means that $\vec v$ is within the valid tangents $\overline{V_{x}}$  surrounding $\vec v$, so by Theorem~\ref{TT-cor2}, there is no driving force making rigid folding uniquely self-foldable.  The case of the Chicken Wire crease pattern follows similarly.

\end{proof}


\begin{proof}[Proof of (c) and (d)]
For these proofs we need to use a result from \cite{rigid-foldability-vert:2016}.  For this we define a {\em bird's foot} to be a collection of three creases $c_0, c_1, c_2$ meeting at a vertex with the same MV assignment and $0<\angle(c_i,c_{i+1})<\pi$ for $i=0,1,2$ (mod 3), together with a fourth $c_3$ crease meeting the same vertex but with opposite MV parity to that of $c_0, c_1, c_2$.  Then the following is a special case of the Main Theorem from \cite{rigid-foldability-vert:2016}.

\begin{theorem}\label{TT-thm5}
A degree-4 vertex is rigidly foldable by a finite amount if and only if it is a bird's foot.
\end{theorem}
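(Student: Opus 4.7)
The plan is to reduce the rigid foldability of a degree-4 vertex to the flexibility of the associated spherical four-bar linkage, obtained by intersecting a small sphere centered at the vertex with the four creases. Under this correspondence the sector angles $\alpha_1,\alpha_2,\alpha_3,\alpha_4$ between consecutive creases become the arc lengths of the four bars, while the folding angles $\rho_1,\rho_2,\rho_3,\rho_4$ become the joint angles of the linkage. A finite rigid folding of the vertex corresponds precisely to a nontrivial one-parameter motion of this linkage; equivalently, the closure condition that the product of four rotations about the consecutive crease axes equals the identity must cut out a real one-dimensional algebraic curve through the flat state in $\mathbb{R}^4$, rather than just the origin as an isolated point.

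First I would write out the closure condition explicitly and eliminate variables to obtain the implicit algebraic relations between opposite pairs of folding angles. The half-angle substitution $t_i = \tan(\rho_i/2)$ converts the closure into rational equations, and for a generic (non-degenerate) degree-4 vertex this shows that exactly two one-parameter folding modes emanate from the flat state. Each mode is characterized by how the signs of opposite folding angles are coupled, and in each one a unique crease is forced to carry the opposite MV sign from the other three. This already recovers the 3-vs-1 combinatorial structure required in the definition of a bird's foot, and in particular recovers the classical Maekawa-type count without appealing to flat-foldability.

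For the \emph{only if} direction I would then show that finite foldability forces the sector-angle hypothesis $0 < \angle(c_i,c_{i+1}) < \pi$ on the three same-MV creases. Geometrically, if one of these three angles were $\geq \pi$ then the ``odd'' crease would sit in a reflex sector between two of the other three, the spherical four-bar would fail the standard inequalities on bar lengths required for motion through the flat state, and the candidate folding curve would collapse to the isolated origin, contradicting finite foldability. For the \emph{if} direction, given a bird's foot I would parameterize the motion by the folding angle of the odd crease and use the implicit function theorem together with the explicit folding-multiplier formulas to extend the other three angles smoothly away from $0$ while respecting the prescribed MV assignment.

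The main obstacle will be the case analysis required in the spherical four-bar step: one must show that the sector-angle bounds are not merely necessary but actually sufficient to exclude every degenerate configuration (folded-flat positions, input cranks that cannot rotate through any open interval, self-intersections of the linkage) that could obstruct a continuous real motion emanating from $\rho = 0$. Managing these degeneracies one by one, and checking that the sign patterns predicted by the half-angle analysis are actually realized by the resulting motion, is the content that the cited paper \cite{rigid-foldability-vert:2016} carries out in detail.
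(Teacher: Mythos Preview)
The paper does not prove Theorem~\ref{TT-thm5} at all: it is stated as a special case of the Main Theorem of \cite{rigid-foldability-vert:2016} and then used as a black box in the proof of parts (c) and (d) of Theorem~\ref{TT-thm4}. So there is no ``paper's own proof'' to compare your proposal against.

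That said, your sketch is a reasonable outline of how the cited result is obtained, and you yourself acknowledge in the last paragraph that the case analysis is what \cite{rigid-foldability-vert:2016} carries out. One small point of care: in the bird's-foot definition the angle constraints $0<\angle(c_i,c_{i+1})<\pi$ are imposed on the pairwise angles among the three \emph{same-MV} creases $c_0,c_1,c_2$, not directly on the four sector angles of the vertex. The consequences the paper actually uses (no sector angle $\geq\pi$; the two sectors adjacent to the different crease sum to less than $\pi$) follow from this, but your ``only if'' paragraph slightly conflates the two. Otherwise the spherical-four-bar reduction, the half-angle parameterization, and the identification of two modes with a unique ``different'' crease in each are exactly the ingredients one expects, and they match what the paper invokes downstream.
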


This result implies a few things.  First, it gives us that any degree-4, rigidly folding vertex must be made of three mountains and one valley, or vice-versa.  That is, exactly one of the four creases will be have opposite MV parity from the others, and we call this crease the {\em different crease} at the vertex.  
Second, none of the sector angles of a degree-4 rigidly folding vertex can be $\geq \pi$.  Third, the sector angles that border the different crease must sum to less that $\pi$.


Consider the two pairs of opposite creases at a degree-4 vertex. 
If the vertex is not mirror-symmetric, (i.e., is not in a Miura or Chicken Wire crease pattern), a pair of opposite creases form non-straight angle, so they divide the plane into a convex side and a concave side with respect to the pair of creases. 
We consider this partition for both pairs of opposite creases. 
The intersection of the concave sides forms a sector, see Figure~\ref{TT-fig3.1}(a), whose incident creases $c_1$ and $c_4$ cannot have the same MV parity (since one of them must be the different crease at the vertex).
Therefore, the opposite angle, i.e., the intersection of the convex sides, see Figure~\ref{TT-fig3.1}(b), have incident creases $c_2$ and $c_3$ that need to have the same MV assignment.  In this way we see that the vertex has two possible modes, since there are only two places (crease $c_1$ or $c_4$ in Figure~\ref{TT-fig3.1}) for the different crease at the vertex.

\begin{figure}
	\centering
	\includegraphics[scale=.7]{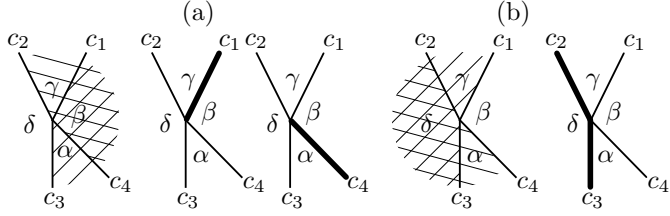}
	\caption{(a) Intersection of the convex sides. (b) The concave sides intersection creases cannot have the same MV parity.}
	\label{TT-fig3.1}
\end{figure}


Now, in the rotationally-symmetric quadrilateral tessellation for case (c) and (d) of Theorem~\ref{TT-thm4}, pick a folding mode (say mode 1) for one of the vertices, say $v_1$ as in Figure~\ref{TT-fig3.2} with three mountains and one valley.  Then the convex-side creases $c_2$ and $c_3$ will form a zig-zag of mountain creases with the convex-side creases of the neighboring vertices, and this will extend to an infinite mountain polyline $P$ of the tessellation.  

Similarly, the concave-side creases $c_1$ and $c_4$ of $v_1$ will have different MV parity.  Assume that $v_1$ being in mode 1 means $c_1$ is M and $c_4$ is V.  Then if $v_4$ is the other vertex adjacent to $c_4$, by the rotational symmetry of the tiling, $c_4$ will also be a concave-side crease of $v_4$, which means that the other concave-side crease, $c_5$ at $v_4$ will have different MV parity from $c_4$.  Thus $c_5$ is a mountain.  The same argument gives that crease $c_8$ in Figure~\ref{TT-fig3.2} is a valley, and another zig-zag polyline of creases in the tessellation will be determined to alternate Ms and Vs.  

Thus vertex $v_3$ in Figure~\ref{TT-fig3.2} will have the same MV assignment as vertex $v_1$, which means that $v_3$ is also mode 1.

\begin{figure}
	\centering
	\includegraphics[scale=.6]{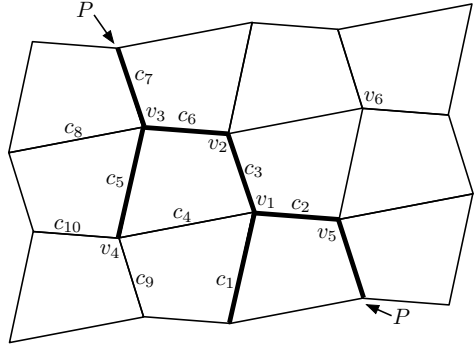}
	\caption{Cascading modes in a rotationally-symmetric monohedral quadrilateral tessellation.}
	\label{TT-fig3.2}
\end{figure}

We then argue that vertex $v_2$ must also be a mode 1 vertex.  This is because the folding angles for the creases $c_3$ and $c_6$ are exactly the folding angles that allow $v_2$ to be mode 1, and if $v_2$ were  mode 2 then a different combination of folding angles for these creases would be needed.  That is, the folding angles equations for these two modes are sufficiently different (in this case where opposite creases at the vertices do not form a straight line) that they do not allow $v_2$ to be in mode 2 while $v_1$ and $v_3$ are in mode 1.\footnote{This is easily verified in the flat-foldable case by examining the actual relative folding angle equations of the creases the actual folding angle equations, as can be seen in \cite{Langtwists,Izmestiev2017,Tachi2016} as well as in the alternate proof below.  The non-flat-foldable equations are much more complicated, but can be seen in \cite{Huff76,Izmestiev2017}.}  Similarly, creases $c_4$ and $c_5$ have folding angles that make vertex $v_4$ be mode 1, and this forces $c_9$ and $c_{10}$ to both be mountains.  (If they were valleys, $v_4$ would be in mode 2.)  

This perpetuates  to the rest of the crease pattern; vertex $v_5$ must also be mode 1 with three Ms and one V, and so must $v_6$ and all the other vertices.  We conclude that the entire tessellation has only two folding modes:  one where all vertices are mode 1, and one where all vertices are mode 2.

These two modes have different tangent vectors in the configuration space at the flat, unfolded state. That is, if $\vec 0$ is the origin, then set of valid tangents $V_{\vec 0}$ consists of four vectors that come in antipodal pairs, one pair for each mode. The pair for one mode will not be contained in the 1-dimensional linear space spanned by the other antipodal, and so by Theorem~\ref{TT-cor2} there exists a driving force making the rigid folding uniquely self-foldable to either mode.

\end{proof}

To help see how the explicit folding angle equations verify the above proof, we give an alternate proof of part (c) of Theorem~\ref{TT-thm4}, which is the flat-foldable monohedral tiling case.  

\begin{proof}[Another proof of (c)]

The vertices made from the monohedral tiling in this case are flat-foldable with adjacent sector angles $\alpha, \beta$ (with $\alpha< 90^\circ$), and thus the other two angles are $\pi-\alpha$ and $\pi-\beta$ to ensure that Kawasaki's Theorem holds at each vertex.  From geometry we know that monohedral tiles that fit these conditions are exactly those inscribable in a circle.

\begin{figure}
	\centering
	\includegraphics[width=\linewidth]{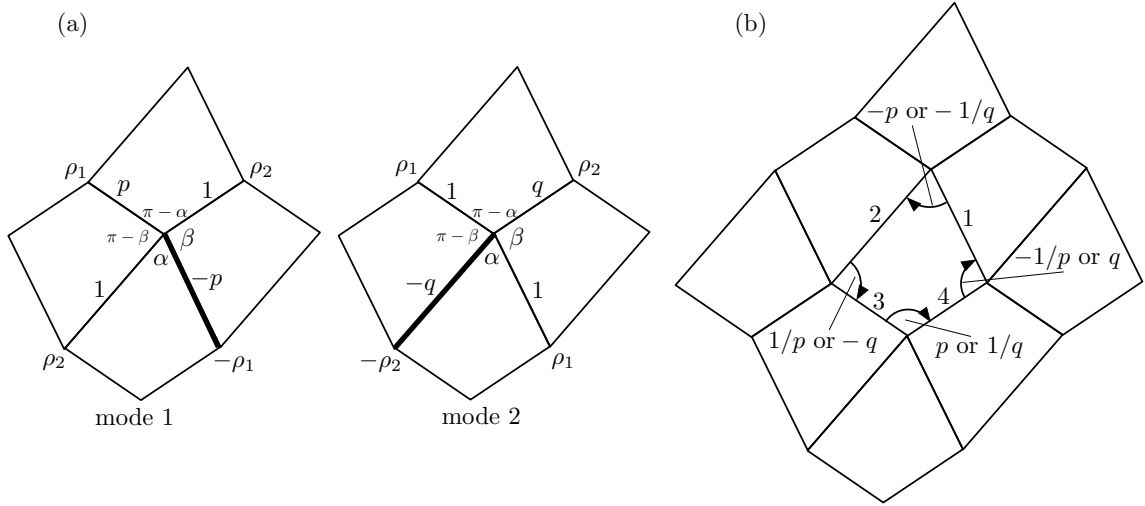}
	\caption{(a) The possible folding multipliers of the creases of a vertex in mode 1 and mode 2.  (b) The possible folding multiplier ratios at each vertex in a quadrilateral of the flat-foldable monohedral tiling.}
	\label{TT-fig4}
\end{figure}

A vertex of such a tiling will, when rigidly folded, have folding angles $\rho_1,\ldots, \rho_4$ that satisfy the following equations (see \cite{Tachi2016} for details):
$$\rho_1=-\rho_3, \rho_2=\rho_4,\mbox{ and }\tan\frac{\rho_1}{2}=p\tan\frac{\rho_2}{2}\mbox{ for mode 1, and}$$
$$\rho_1=\rho_3, \rho_2=-\rho_4,\mbox{ and }\tan\frac{\rho_2}{2}=q\tan\frac{\rho_1}{2}\mbox{ for mode 2,}$$
where mode 1 and mode 2 represent the two ways such a vertex can fold, based on its MV assignment; see Figure~\ref{TT-fig4}(a).  Here $p=\frac{\cos((\alpha+\beta)/2)}{\cos((\alpha-\beta)/2)}$ and $q=-\frac{\sin((\alpha-\beta)/2)}{\sin((\alpha+\beta)/2)}$.  

Now consider a polygonal tile in our origami tessellation, like the one bounded by edges 1, 2, 3, and 4 in Figure~\ref{TT-fig4}(b).  Proceeding counter-clockwise around the polygon, we assign to each vertex a ratio $\mu_i$ of the folding multipliers of the creases before and after the vertex.  For example, the top-most vertex in the polygon in Figure~\ref{TT-fig4}(b) will have $\mu_1=$(the folding multiplier of crease 1)$/$(the folding multiplier of crease 2).  A key observation noted by multiple researchers (such as \cite{Langtwists}) is that in order for such a polygon to rigidly fold, we must have that the product of these folding multiplier ratios around the polygon is 1:
\begin{equation}\label{TTeq1}
\prod \mu_i = 1.
\end{equation}
Carefully noting the possible folding multipliers of the creases in mode 1 or mode 2 of our vertices, as seen in Figure~\ref{TT-fig4}(a), we have two possible values for $\mu_i$ at each vertex in our quadrilateral tile (see Figure~\ref{TT-fig4}(b)).  Thus Equation~\eqref{TTeq1} becomes
\begin{equation}\label{TTeq2}
(-p\mbox{ or }-\frac{1}{q})(\frac{1}{p}\mbox{ or }-q)(p\mbox{ or }\frac{1}{q})(-\frac{1}{p}\mbox{ or }q)=1.
\end{equation}
Equation~\eqref{TTeq2} has only two solutions:
\begin{equation*}
(-\frac{1}{q})(-q)(\frac{1}{q})(q)  = 1\mbox{ and }
(-p)(\frac{1}{p})(p)(-\frac{1}{p})  =1.
\end{equation*}
The MV assignments and crease folding multipliers that result from these solutions are shown in Figure~\ref{TT-fig5}; they perpetuate throughout the entire origami tessellation, and thus prove that there are only two rigidly-foldable modes of this crease pattern.
\begin{figure}
	\centering
	\includegraphics[scale=.5]{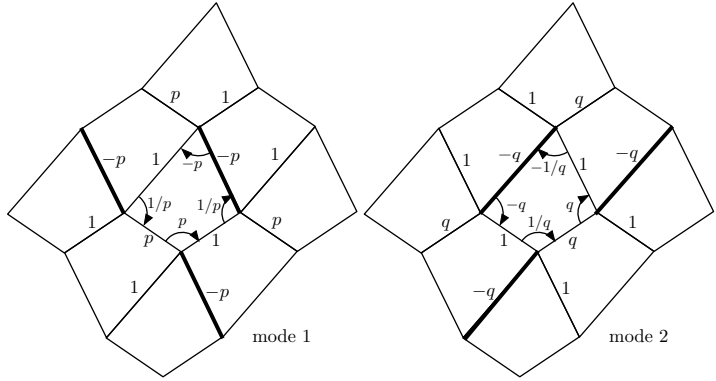}
	\caption{(a) The two folding modes of a flat-foldable, monohedral tiling made from a rotationally-symmetric quadrilateral.  The bold lines are mountains and the non-bold lines are valleys.}
	\label{TT-fig5}
\end{figure}

These two different modes have different tangent vectors at the flat state, so by Theorem~\ref{TT-cor2} there exists a driving force making the rigid folding uniquely self-foldable to either mode.
\end{proof}

\section{Conclusion}
\label{sec:conclusion}

We have proven that monohedral tilings made by quadrilaterals that are rotationally symmetric about the midpoints of their sides are, in most cases, uniquely self-foldable.  That is, if we pick a target folding state reachable by a rigid folding $\vecrho(t)$, there will exist a driving force $\vec f$ that will fold the crease pattern along the proper mode to the target state.  The only such quadrilateral tile that does not fall into this result is the isosceles trapezoid tile, which gives the non-uniquely self-foldable Chicken Wire crease pattern.  The Miura-ori tiling is not rotationally-symmetric, but it fails to be uniquely self-foldable as well and thus we include it in our Theorem~\ref{TT-thm4}.  The proofs of these results amount to the fact that when the quadrilateral tile is generic (in that it lacks the symmetry of parallelograms or isosceles trapezoids), the resulting tessellation will be a rigid origami with only two folding modes, and fewer folding modes imply easier self-foldability.  The Miura-ori and Chicken Wire patterns, however, have more symmetry and thus more folding modes, making them impossible to self-fold according to our definition.  

We should point out that our definition of self-foldability is a relatively abstract model in that it does not take material stiffness or bending energy into account.  Recent work by physicists are providing insight into the practical side of self-foldability models like ours.   In \cite{Stern2018} the authors compare folding from the unfolded state to finding the ground state in a glassy energy landscape, and in \cite{Chen2018} a theory for second-order rigidity of triangulated crease patterns is derived.  Both of these new papers prove the exponential growth of the number of branches at the origin in the configuration space of a rigidly-foldable crease pattern as the number of vertices increases.  But as we have seen in this paper, there are rigidly-foldable origami tessellaitons with very few folding modes (and thus few configuration space branches at the origin).  The rotationally-symmetric quadrilateral tessellations presented here would make good candidates for testing how reliably a crease pattern can self-fold to a target state.  

\bibliographystyle{osmebibstyle}
\bibliography{7OSME-monohedral}


\end{document}